\newcommand{\R}{\mathbf{R}}
\newcommand{\h}{\overline h}
\newcommand{\hh}{\overline h}
\DeclareMathOperator{\spt}{spt}
\def\loc{{\mathrm{loc}}}
\newcommand{\Lip}{{\mathrm{Lip}_c^{\h}}}
\newcommand{\Liph}{{\mathrm{Lip}_c^{\hh}}}
\newcommand{\ho}{h_{0}}
\newcommand{\he}{h_{\eps}}
\newcommand{\W}{\overline W}
\newcommand{\eps}{\varepsilon}
\newcommand{\la}{\langle}
\newcommand{\ra}{\rangle}
\newcommand{\lla}{\langle\langle}
\newcommand{\rra}{\rangle\rangle}
\newcommand{\ue}{u_{\eps}}
\newcommand{\ve}{v_{\eps}}
\newcommand{\we}{w_{\eps}}
\newcommand{\We}{W_{\eps}}
\newtheorem{prop}{Proposition}
\newtheorem{theorem}{Theorem}
\newtheorem{lemma}{Lemma}
\begin{document}

\title{A penalization approach to linear programming duality with application to
    capacity constrained transport}

\author{Jonathan Korman, Robert J. McCann, and Christian 
Seis\thanks{Department of Mathematics, University of Toronto, Toronto ON Canada M5S 2E4
 \copyright 2013 by the authors. 
The authors are grateful to Terry Rockafellar for insightful comments on an earlier
version of this manuscript. They are also pleased to acknowledge partial support by
NSERC grant 217006-08. RJM acknowledges the kind hospitality of the Mathematical
Sciences Research Institute at Berkeley CA, where part of this work was performed.}}

\maketitle
\abstract{A new approach to linear programming duality is proposed which relies on
quadratic penalization,  so that the relation between solutions to the penalized primal
and dual problems becomes affine.  This yields a new proof of Levin's
duality theorem for capacity-constrained optimal transport as an infinite-dimensional
application.}

\section{Introduction}

Given a distribution of sources (manufacturers) $f(x)$ and sinks (consumers) $g(y)$, and a function $c(x,y)$ that measures the cost of transporting a unit of mass from $x\in \R^m$ to $y\in \R^n$, the optimal transport problem of Monge \cite{Monge81} and Kantorovich \cite{Kantorovich42} seeks to minimize the total cost required to transport $f$ to $g$. We consider a variant of that classical problem by imposing a limitation on the amount of mass that is allowed to be transferred from $x$ to $y$: The {\em capacity constrained} optimal transport problem.

\medskip

For two given probability distributions $f\in L^{1}(\R^m)$ and $g\in L^{1}(\R^n)$, and a fixed nonnegative function $\overline h\in L^{\infty}(\R^m\times \R^n)$, we denote by $\Gamma^{\overline h}(f,g)$ the set of all nonnegative measurable joint densities that are bounded by $\overline h$, i.e., $f(x) = \int h(x,y)\, dy$, $g(y)= \int h(x,y)\, dx$, and $0\le h\le\h$. Necessary and sufficient conditions for $\Gamma^{\overline h}(f,g)$ to be nonempty are given by Kellerer \cite{Kellerer64a,Kellerer64b} and Levin \cite{Levin84}, namely $\Gamma^{\overline h}(f,g)\not=\emptyset$ if and only if
\[
f(A)+g(B)-\bar{h}(A \times B) \leq 1 \quad\mbox{for any Borel measurable }A\subset\R^m,\, B\subset \R^n. 
\]
Throughout this article, we will always assume that these conditions are satisfied. Finally, let $c\in L_{\loc}^1(\R^m\times\R^n)$ and
\[
I_c(h):= \iint c(x,y) h(x,y)\, dxdy.
\]
The {\it optimal transportation problem with capacity constraints} consists in finding and studying optimal transference plans $\ho\in\Gamma^{\h}(f,g)$ for the total cost functional $I_c$:
\[
I_c(\ho)\;=\; \min_{h\in\Gamma^{\h}(f,g)} I_c(h).
\]

\medskip

Optimal transference plans always exist as can be easily established via the direct method of calculus of variations. Regarding the capacity constrained optimal transport as an infinite-dimensional linear programming problem, it is not surprising that minimizers are extreme points of the convex polytope $\Gamma^{\h}(f,g)$. They can be characterized by $\ho=\h\chi_W$ for some Lebesgue measurable set $W$ in $\R^m\times \R^n$ \cite{KormanMcCann13}. Under suitable conditions on the cost function $c$, minimizers are unique \cite{KormanMcCann12}.

\medskip

In this short manuscript, we address the linear programming duality for capacity constrained optimal transport. Although such a duality was already established by Levin\footnote{In a private communication, Rachev and R\"uschendorf attribute Theorem
4.6.14 of \cite{RachevRuschendorf98} to a handwritten manuscript of Levin;
we are unsure where or whether it was subsequently published.}
 (see Theorem 4.6.14 of \cite{RachevRuschendorf98}), we present an alternative proof here. While Rockafellar-Fenchel dualities (including Levin's, and the Kantorovich's duality for classical optimal transport, cf.\ \cite[Ch.\ 1]{Villani1}) are usually proved using an asbtract minimax argument with the Hahn--Banach theorem at its core, our new proof is rather elementary and is based on a quadratic approximation of the linear program, cf.\ Section \ref{S2}. Combining the techniques presented in the following with some of the results derived by the authors in a companion paper \cite{KormanMcCannSeis13b}, we also provide a new {\em elementary} proof of Kantorovich's duality.

\medskip

We prove Levin's duality under the additional assumption that the capacity bound $\h$ is compactly supported, and we write $\W=\spt(\h)$. Notice that under this hypothesis, $h,\, f$, and $g$ are bounded and compactly supported, so that in particular $f,\, g,\ h\in L^p$ for any $1\le p\le \infty$.

\medskip

Before stating Levin's duality theorem, we introduce some notation. Given a function $\zeta=\zeta(x,y)$ defined on $\R^m\times\R^n$, we write $\la\zeta\ra_x$ and $\la\zeta\ra_y$ for the $x$- and $y$-marginals of $\zeta$, i.e., $\la \zeta\ra_x:=\int \zeta(x,y)\, dy$ and 
$\la \zeta\ra_y:=\int \zeta(x,y)\, dx$. The integral over the product space is denoted by $\lla\zeta\rra$, i.e., $\lla \zeta\rra:=\iint \zeta(x,y)\, dxdy$. Likewise, if $\zeta=\zeta(x)$ or $\zeta=\zeta(y)$, we simply write $\la\zeta\ra$ to denote the integral over $\R^m$ or $\R^n$, respectively.
With the above notation, the total cost functional becomes
\[
I_c(h)\;:=\; \lla ch\rra.
\]

\medskip

We introduce some further notation. Let
\[
J(u,v,w)\;:=\;  - \la uf \ra - \la vg\ra + \lla w \h\rra,
\]
and
\begin{eqnarray*}
\Lip&=&\left\{ (u,v,w)\in L^1(f\, dx)\times L^1(g\,dy)\times L^1(\bar h \,dxdy):\right.\\
&&\mbox{}\left.\: u(x) + v(y) - w(x,y) + c(x,y)\ge 0\mbox{ and }w(x,y)\le0\right\}.
\end{eqnarray*}
Here, we use the notation that $L^1(\mu)$ is the class of all absolutely integrable functions with respect to the measure $\mu$. Obviously, $J(u,v,w)$ is well-defined on $\Lip$.

\medskip

Our main result is the following

\begin{theorem}[Levin's duality]\label{T1}Let $0\le \h\in L^{\infty}(\R^m\times \R^n)$ compactly supported and $f\in L^1(\R^m)$ and $g\in L^1(\R^n)$ be two probability densities such that $\Gamma^{\h}(f,g)\not=\emptyset$. Suppose that $c\in L^1_{\loc}(\R^m\times \R^n)$.
Then
\[
\min_{h\in\Gamma^{\h}(f,g)} I_c(h)\;=\; \sup_{(u,v,w)\in \Lip} J(u,v,w).
\]
\end{theorem}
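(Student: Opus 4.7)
The plan is to prove strong duality via a strongly convex perturbation of the primal. The easy half, weak duality $\min I_c \ge \sup J$, follows by multiplying the admissibility constraint $u(x)+v(y)-w(x,y)+c(x,y)\ge 0$ pointwise by $h\in\Gamma^{\h}(f,g)$, integrating, and then using $w\le 0$ together with $h\le\h$ to obtain $\lla wh\rra \ge \lla w\h\rra$. This gives
$$I_c(h)\;\ge\; -\la uf\ra - \la vg\ra + \lla wh\rra \;\ge\; -\la uf\ra - \la vg\ra + \lla w\h\rra = J(u,v,w),$$
and the reverse inequality is the substantive direction.

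To establish it, I would regularize the linear program by a quadratic penalty. For $\eps > 0$ set
$$I_c^\eps(h) \;:=\; \lla ch\rra + \tfrac{\eps}{2}\lla h^2\rra$$
and let $h_\eps$ be the unique minimizer of $I_c^\eps$ over $\Gamma^{\h}(f,g)$; existence and uniqueness follow from strict convexity, weak $L^2$-compactness of the admissible set (recall $\h\in L^\infty$ has compact support), and lower semicontinuity. The first-order optimality condition for $h_\eps$ produces multipliers $u_\eps(x), v_\eps(y)$ for the marginal equality constraints and nonnegative multipliers $\mu_\eps, \nu_\eps$ for the pointwise bounds $h\ge 0$ and $h\le\h$, satisfying a pointwise Euler identity
$$u_\eps(x) + v_\eps(y) + c(x,y) + \eps h_\eps(x,y) \;=\; \mu_\eps(x,y) - \nu_\eps(x,y),$$
with complementary slackness $\mu_\eps h_\eps = 0$ and $\nu_\eps(\h - h_\eps) = 0$. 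Setting $w_\eps := -\nu_\eps \le 0$ and $\hat w_\eps := w_\eps - \eps h_\eps \le 0$, the Euler identity rearranges to $u_\eps + v_\eps - \hat w_\eps + c = \mu_\eps \ge 0$, so $(u_\eps, v_\eps, \hat w_\eps) \in \Lip$. On the non-contact set $\{0 < h_\eps < \h\}$ both $\mu_\eps$ and $\nu_\eps$ vanish by complementary slackness, and the Euler identity degenerates to the affine relation $\eps h_\eps = -(u_\eps + v_\eps + c)$ between the primal and dual variables --- this is the affine correspondence stressed in the introduction.

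Multiplying the Euler identity by $h_\eps$, integrating, and exploiting complementary slackness (so that $\lla \nu_\eps h_\eps\rra = \lla \nu_\eps \h\rra$ and $\lla \mu_\eps h_\eps\rra = 0$) together with the marginal identities $\lla u_\eps h_\eps\rra = \la u_\eps f\ra$ and similarly for $v_\eps$, yields the penalized strong duality $J(u_\eps, v_\eps, w_\eps) = I_c(h_\eps) + \eps\lla h_\eps^2\rra$, whence $J(u_\eps, v_\eps, \hat w_\eps) = I_c(h_\eps) - \eps\lla h_\eps(\h - h_\eps)\rra$. Now pass $\eps \downarrow 0$: the $h_\eps$ are uniformly bounded in $L^\infty$ with common compact support, hence weakly $L^2$-precompact, and any cluster point is a minimizer of $I_c$. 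The error term is $O(\eps)$, so $J(u_\eps, v_\eps, \hat w_\eps) \to \min I_c$; combined with weak duality this forces $\sup_{\Lip} J = \min I_c$.

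The principal technical obstacle is the construction of the multipliers $u_\eps, v_\eps$ in the integrability classes $L^1(f\, dx)$ and $L^1(g\, dy)$ required for $\Lip$. The marginal equality constraints are infinite-dimensional, and the Euler identity on the non-contact set pins down only the \emph{sum} $u_\eps(x) + v_\eps(y)$, not the individual summands. One has to exploit the pointwise and affine character of the Euler relation to construct a measurable $x/y$-decomposition, and then use the compact support of $\h$ (hence of $f, g, h_\eps$) to secure the uniform integrability of $u_\eps f$ and $v_\eps g$ as $\eps \to 0$. In the classical proof this step relies on an abstract Hahn--Banach argument; the quadratic penalty renders it more concrete here.
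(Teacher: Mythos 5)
Your weak-duality half is correct and essentially identical to the paper's. The substantive direction, however, penalizes the \emph{wrong thing}: you add $\tfrac{\eps}{2}\lla h^2\rra$ to the objective while keeping the marginal constraints $\la h\ra_x=f$, $\la h\ra_y=g$ as hard constraints, and then you must \emph{produce} Lagrange multipliers $u_\eps, v_\eps$ for those infinite-dimensional equality constraints. You correctly flag this as ``the principal technical obstacle,'' but it is fatal, not technical: producing those multipliers is exactly the content of the Hahn--Banach/minimax argument that the penalization was supposed to circumvent. Your quadratic term makes the minimizer $h_\eps$ unique, but uniqueness of $h_\eps$ does not conjure $u_\eps$ and $v_\eps$ into existence, and the Euler relation on the non-contact set, $\eps h_\eps=-(u_\eps+v_\eps+c)$, determines only the \emph{sum} $u_\eps+v_\eps$ --- you face the same additive-decomposition difficulty as in the unpenalized problem, with no new leverage.

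The paper's key idea is to penalize the \emph{marginal constraints} themselves, minimizing
\[
I_c^{\eps}(h)=\lla ch\rra+\frac{1}{2\eps}\|\la h\ra_x-f\|_2^2+\frac{1}{2\eps}\|\la h\ra_y-g\|_2^2
\]
over the box $0\le h\le\h$ only. Then the first-order condition for the relaxed problem \emph{hands you} the dual variables explicitly,
\[
u_\eps=\tfrac1\eps\left(\la h_\eps\ra_x-f\right),\qquad v_\eps=\tfrac1\eps\left(\la h_\eps\ra_y-g\right),\qquad w_\eps=\min\{c+u_\eps+v_\eps,\,0\},
\]
so there is no multiplier existence problem and no decomposition problem: $u_\eps$ and $v_\eps$ are concrete $L^2$ (hence $L^1$ on the compact support) functions of $x$ and $y$ separately, and the affine relation you were hoping for is \emph{built in}. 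Membership in $\Lip$ is then immediate, and the identity $J^\eps(u_\eps,v_\eps,w_\eps)=I_c^\eps(h_\eps)$ follows by a pointwise Euler--Lagrange argument plus complementary slackness (the paper's Lemmas 3--4). Your $\eps\downarrow 0$ limit argument is in the right spirit, but it cannot get off the ground until the multipliers exist. If you want to retain your penalty $\tfrac{\eps}{2}\lla h^2\rra$, you would additionally have to soften the marginal constraints anyway; at that point you have rediscovered the paper's scheme with a redundant extra term.
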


In \cite{KormanMcCannSeis13b}, the authors prove that the supremum on the right is attained by triple of measures of finite total variation which --- so far as we know --- need not generally be absolutely continuous with respect to Lebesgue.

\medskip

In the following Section \ref{S2}, we illustrate the method of this paper by considering an
analogous problem in finite-dimensions. The proof of Theorem \ref{T1} is presented in Section \ref{S3}.

\section{Finite-dimensional linear programming duality}\label{S2}
In this section we illustrate the method of this paper by providing a non-standard proof of the finite-dimensional linear programming duality, which is new, as far as we know. For $A\in \R^{m\times n}$, $c\in \R^n$, and $b\in \R^m$, duality asserts
\begin{equation}\label{19}
\inf_{y\ge 0,\, A^T y=c} b\cdot y\;=\; \sup_{Ax\le b} c\cdot x,
\end{equation}
where, of course, $x\in \R^n$ and $y\in \R^m$, cf. \cite[Ch.\ 4]{Luenberger}. We understand the inequalities $y\ge0$ and $Ax\le b$ componentwise. The advantage of our proof of \eqref{19} is that it 
generalizes in a straightforward way to certain infinite-dimensional problems, as we will see in the subsequent section. There we give a new proof of Levin's duality, Theorem \ref{T1}.

\medskip

The basic idea in our proof of \eqref{19} is to relax the equality constraint in the minimization problem by adding a penalizing quadratic term to the linear function. That is, we consider the quadratic function
\[
I^{\eps}(y):= b\cdot y + \frac1{2\eps}|A^T y-c|^2,
\]
and minimize $I^{\eps}$ over all $y$ such that $y\ge0$. Relaxing a minimization problem by approximating hard by soft constraints is a fairly standard procedure in the calculus of variations whether it be to regularize singular problems (e.g.\ \cite{BBH}) or simply to extend the class of admissible competitors (e.g.\ \cite{MSS}). In particular, when dealing with constraints of different kinds, as in the capacity constrained optimal transport problem, relaxing some of these constraints eventually simplifies the computation of the Euler--Lagrange equation dramatically, see e.g.\ Lemma \ref{L3} below.

\medskip

The key observation in our analysis is a duality theorem for the relaxed problem,
\begin{equation}\label{20}
\min_{y\ge0} I^{\eps}(y)\;=\;\max_{Ax\le b} J^{\eps}(x),
\end{equation}
provided that the minimum on the left is attained, and where $J^{\eps}(x) = c\cdot x - \frac{\eps}2 |x|^2$. The derivation of the ``$\inf\ge\sup$''-inequality is standard: Using $y\ge 0$ and $b\ge Ax$, we have
\begin{eqnarray*}
I^{\eps}(y) & \ge& Ax\cdot y + \frac1{2\eps}|A^T y-c|^2\\
&=& c\cdot x + x\cdot\left(A^T y-c\right) + \frac1{2\eps}|A^T y-c|^2\\
&=& c\cdot x -\frac{\eps}2 |x|^2  + \frac{\eps}2|x + \frac1{\eps}\left(A^Ty-c\right)|^2\\
&\ge& J^{\eps}(x),
\end{eqnarray*}
and the statement follows upon taking the infimum on the left and the supremum on the right. Moreover, the above inequality turns into an equality for any pair $(y_{\eps},x_{\eps})$ with $y_{\eps}\ge0$, $Ax_{\eps}\le b$ and
\begin{equation}\label{21b}
x_{\eps}=\frac1{\eps} \left(c - A^T y_{\eps}\right).
\end{equation}
In particular, if such a pair exists, we must have
\[
\min_{y\ge0} I^{\eps}(y)\;=\; I^{\eps}(y_{\eps})\;=\; J^{\eps}(x_{\eps})\;=\; \max_{Ax\le b} J^{\eps}(x),
\]
that is \eqref{20} holds. The existence of $(y_{\eps},x_{\eps})$ with $y_{\eps}\ge0$, $Ax_{\eps}\le b$, and \eqref{21b} is a simple but crucial insight, and follows from a direct derivation of the first-order necessary condition for existence of minimizers of $I^{\eps}$. Whether the minimum is attained in this finite-dimensional toy problem certainly depends on the particular choice of the matrix $A$. Notice, however, that existence of minimizers is obvious when including the ``capacity constraint'' $y\le\bar y$ for some $\bar y\in \R^m$ into the problem, which would actually correspond to the real finite-dimensional analog for the problem considered in this paper. To keep the discussion in this section as elementary as possible, we simply drop this capacity constraint and assume the existence of a minimizer $y_{\eps}$ of $I^{\eps}$ in the following.

\medskip

If $y_{\eps}$ is a minimizer of $I^{\eps}$ under the constraint $y\ge0$, and $\xi$ is an arbitrary vector in $\R^m$ such that $\xi^i\ge 0$ if $y_{\eps}^i=0$, we have $y_{\eps} + s\xi\ge 0$ for all $s>0$ sufficiently small, and thus $I^{\eps}(y_{\eps})\le I^{\eps}(y_{\eps} + s\xi)$. Consequently,
\[
0\;\le\; \left.\frac{d}{ds}\right|_{s=0} I^{\eps}(y_{\eps} + s\xi)\;=\; \left(b + \frac1{\eps} A\left(A^Ty_{\eps}-c\right)\right)\cdot \xi.
\]
By the choice of $\xi$, this implies that
\[
b + \frac1{\eps} A\left(A^Ty_{\eps}-c\right)\;\ge\;0,
\]
and thus, $x_{\eps}$ defined as in \eqref{21b} satisfies $b\ge Ax_{\eps}$. Hence, $(y_{\eps},x_{\eps})$ is dual pair with the desired properties. This proves \eqref{20} under the assumption that the minimum of  $I^{\eps}$ is attained.

\medskip

There is a remarkable {\em affine} relation \eqref{21b} between the maximizer $x_{\eps}$ of $J^{\eps}$ and the minimizer $y_{\eps}$ of $I^{\eps}$. This relation, however, is not surprising, since \eqref{21b} can be also be derived as the first order necessary condition for the dual maximum problem, which is linear in $x_{\eps}$ since $J^{\eps}$ is quadratic, and in which $y_{\eps}$ plays the role of the Lagrange multiplier associated with the constraint $Ax\le b$.

\medskip

It remains to pass to the limit $\eps\downarrow 0$ in \eqref{20} to obtain \eqref{19}, but we omit the details at this point.

\section{Proof of Levin's duality theorem}\label{S3}

Theorem \ref{T1} is an immediate consequence of the following two Propositions:
\begin{prop}\label{P1}The hypotheses of Theorem \ref{T1} imply
\begin{equation}\label{1}
\inf_{h\in\Gamma^{\h}(f,g)} I_c(h)\;\ge\; \sup_ {(u,v,w)\in \Lip} J(u,v,w).
\end{equation}
\end{prop}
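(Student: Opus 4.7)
The plan is to prove the weak duality inequality directly by a pointwise pairing argument, which is standard in linear programming: multiply the key constraint defining $\Lip$ by an admissible $h$, integrate, and exploit the marginal conditions together with $w \le 0$. This is the ``easy'' half of duality and does not require the penalization machinery introduced in Section \ref{S2}; the penalization will be reserved for the reverse inequality.

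Concretely, I would fix an arbitrary $h\in\Gamma^{\h}(f,g)$ and an arbitrary triple $(u,v,w)\in\Lip$. Since $h\ge 0$, multiplying the defining inequality $u(x)+v(y)-w(x,y)+c(x,y)\ge 0$ by $h(x,y)$ preserves the sign, yielding
\[
h(x,y)u(x) + h(x,y)v(y) - h(x,y)w(x,y) + c(x,y)h(x,y) \;\ge\; 0
\]
pointwise a.e. Integrating over $\R^m\times\R^n$ and applying Fubini, the marginal constraints $\la h\ra_x = f$ and $\la h\ra_y = g$ collapse the first two terms into $\la uf\ra$ and $\la vg\ra$ respectively, giving
\[
\la uf\ra + \la vg\ra - \lla hw\rra + I_c(h) \;\ge\; 0.
\]
Next, the two constraints $w\le 0$ and $0\le h\le \h$ combine to give $hw\ge \h w$ pointwise, hence $\lla hw\rra \ge \lla w\h\rra$. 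Substituting and rearranging yields $I_c(h) \ge J(u,v,w)$, and taking infimum over $h$ and supremum over $(u,v,w)$ delivers \eqref{1}.

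The only point requiring real attention — and the main (mild) obstacle — is justifying the use of Fubini and ensuring every integral is finite. Here the compact support assumption on $\h$ and the integrability hypotheses built into $\Lip$ do all the work: since $u\in L^1(f\,dx)$ and $\int h(x,y)\,dy = f(x)$, one has $\iint |u(x)|h(x,y)\,dxdy = \la |u|f\ra < \infty$, and symmetrically for $v$; since $w\in L^1(\h\,dxdy)$ and $0\le h\le \h$, one has $\iint |w|h \le \iint |w|\h < \infty$; and since $c\in L^1_\loc$ while $h$ is bounded with compact support in $\W=\spt(\h)$, the cost integral $I_c(h)$ is finite as well. No cancellation issues arise because the rearrangement above only uses additivity among finite quantities. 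I do not expect the sign manipulation with $w$ to be subtle, but it is the one place where the perhaps-surprising constraint $w\le 0$ in the definition of $\Lip$ is used, so it merits explicit mention.
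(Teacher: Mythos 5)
Your proposal is correct and is essentially the same argument the paper gives: the paper writes the single algebraic identity $I_c(h) = -\la uf\ra - \la vg\ra + \lla w\h\rra + \lla(c+u+v-w)h\rra + \lla w(h-\h)\rra$ and observes the last two terms are nonnegative, which is precisely your multiply-and-integrate computation condensed into one line. Your explicit check of the integrability needed for Fubini is a welcome elaboration but not a different route.
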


\begin{prop}\label{P2}The hypotheses of Theorem \ref{T1} imply existence of a sequence $\{(\ue ,\ve,\we )\}_{\eps\downarrow0}$ in $\Lip$ such that
\begin{equation}\label{9}
I_c(\ho)\;=\;  \lim_{\eps\downarrow 0}J(\ue,\ve,\we),
\end{equation}
where $\ho$ is a minimizer of the form $\ho=\h\chi_W$.
\end{prop}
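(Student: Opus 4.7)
Following the template in Section~\ref{S2}, the plan is to relax the two marginal equality constraints by adding quadratic penalties while retaining the pointwise bounds $0\le h\le\h$. Specifically, I introduce
\[
I_c^{\eps}(h) \;:=\; \lla ch\rra + \tfrac{1}{2\eps}\la (\la h\ra_x - f)^2\ra + \tfrac{1}{2\eps}\la (\la h\ra_y - g)^2\ra
\]
and minimize $I_c^{\eps}$ over $\{h:0\le h\le\h\}$. Since $\h$ is bounded and compactly supported on $\W$, the admissible set is weak-$*$ sequentially compact in $L^\infty(\R^m\times\R^n)$; the linear term is weak-$*$ continuous on it (since $c\in L^1_{\loc}$ is integrable on the compact set $\W$) and the quadratic penalty is convex and weak-$*$ lower semicontinuous, so a minimizer $\he$ exists by the direct method.

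Next I would compute the first variation at $\he$ along an admissible perturbation $\phi$. Setting
\[
\ue(x):=\tfrac{1}{\eps}\bigl(\la\he\ra_x(x)-f(x)\bigr),\qquad \ve(y):=\tfrac{1}{\eps}\bigl(\la\he\ra_y(y)-g(y)\bigr),
\]
the derivative collapses to $\lla(c+\ue+\ve)\phi\rra$. The one-sided restrictions on $\phi$ imposed by $0\le\he\le\h$ (namely $\phi\ge 0$ a.e.\ on $\{\he=0\}$ and $\phi\le 0$ a.e.\ on $\{\he=\h\}$) then yield the Euler--Lagrange conditions
\[
c+\ue+\ve\,\ge\,0 \text{ on }\{\he=0\},\quad \le\,0\text{ on }\{\he=\h\},\quad =\,0\text{ on }\{0<\he<\h\}.
\]
The natural choice $\we := \min(c+\ue+\ve,\,0)$ then gives $\we\le 0$ and $c+\ue+\ve-\we\ge 0$ by construction, placing $(\ue,\ve,\we)\in\Lip$ once integrability against $f\,dx$, $g\,dy$, and $\h\,dxdy$ is checked (this follows from the compact support of $\W$, $c\in L^1_{\loc}$, and the $L^2$-bound on $\ue,\ve$ obtained a priori from $I_c^{\eps}(\he)\le I_c(\ho)$). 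A case analysis over the three level sets of $\he$ confirms the complementary-slackness identities $(c+\ue+\ve-\we)\he = 0$ and $\we(\h-\he) = 0$ a.e.

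Substituting these identities into the algebraic computation that proves Proposition~\ref{P1} yields the strong duality identity for the relaxed problem,
\[
J(\ue,\ve,\we) \;=\; I_c(\he) + \eps\bigl(\la\ue^2\ra + \la\ve^2\ra\bigr),
\]
an infinite-dimensional analogue of the affine relation \eqref{21b}. To pass to the limit $\eps\downarrow 0$, I use $\ho$ itself as a competitor: its penalty vanishes, so $I_c^{\eps}(\he)\le I_c(\ho)$; in particular $I_c(\he)\le I_c(\ho)$ and the penalty term is bounded. Consequently $\la\he\ra_x\to f$ and $\la\he\ra_y\to g$ in $L^2$, and a weak-$*$ subsequential limit $\tilde h$ of $\he$ must lie in $\Gamma^{\h}(f,g)$. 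Weak-$*$ continuity of $\lla c\,\cdot\,\rra$ on $\W$ then gives $I_c(\he)\to I_c(\tilde h)$, and the chain $I_c(\he)\le I_c(\ho)\le I_c(\tilde h)$ forces $I_c(\he)\to I_c(\ho)$. Combining the strong duality identity with the weak-duality bound $J(\ue,\ve,\we)\le I_c(\ho)$ of Proposition~\ref{P1} produces the sandwich $I_c(\he)\le J(\ue,\ve,\we)\le I_c(\ho)$, whence $J(\ue,\ve,\we)\to I_c(\ho)$, as claimed.

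The main technical obstacle I anticipate is the non-smoothness in the Euler--Lagrange step: making precise the notion of admissible perturbations across the three level sets of $\he$, and verifying $L^1(\h\,dxdy)$-integrability of $\we=\min(c+\ue+\ve,0)$ given only $c\in L^1_{\loc}$. Both difficulties should be manageable because $\W$ is compact, $\h$ is bounded, and the $L^2$-bounds on $\ue,\ve$ are automatic from the penalty being controlled by $I_c(\ho)$.
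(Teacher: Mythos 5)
Your proposal is correct and follows essentially the same route as the paper: quadratic penalization of the marginal constraints, definition of the dual triple via the affine formulas $\ue=\tfrac1\eps(\la\he\ra_x-f)$, $\ve=\tfrac1\eps(\la\he\ra_y-g)$, $\we=\min(c+\ue+\ve,0)$, Euler--Lagrange/complementary-slackness for the relaxed problem, and a penalty-to-zero limit. The only cosmetic differences are that the paper chooses $\he$ geometrically extreme ($\he=\h\chi_{\We}$) so the intermediate level set $\{0<\he<\h\}$ is negligible, whereas you handle all three level sets directly, and at the end the paper explicitly shows $\eps\|\ue\|_2^2,\eps\|\ve\|_2^2\to 0$ while you obtain the same conclusion via a sandwich with the weak-duality bound.
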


The first Proposition is easily established:

\begin{proof}[Proof of Proposition \ref{P1}]
For any coupling $h\in \Gamma^{\h}(f,g)$  with $I_c(h)$ finite, and $(u,v,w)\in\Lip$ we have
\begin{eqnarray*}
I_c(h) &=& - \la uf\ra - \la vg\ra + \lla w\h\rra+\lla \left(c +u+v-w\right)h\rra + \lla w(h-\h)\rra\\
&\ge& J(u,v,w),
\end{eqnarray*}
where in the first line we have used the marginal constraint on $h$ and in the second line we applied the definition of $\Lip$ together with the fact that $0\le h\le\h$. Now, the inequality in \eqref{1} follows immediately upon taking the supremum on the right and the infimum on the left.
\end{proof}

The remainder of the paper is devoted to the proof of Proposition \ref{P2}. 


\medskip

We introduce a {\em relaxed} version of the optimal transportation problem with capacity constraints. Let $\eps>0$ denote a small number. We define the relaxed transportation cost
\begin{eqnarray*}
I_c^{\eps}(h)&=&\lla ch\rra + \frac1{2\eps}\| \la h\ra_x-f\|_2^2+\frac1{2\eps}\ \|\la h\ra_y-g \|_2^2
\end{eqnarray*}
using the $L^2$ norms $\| \cdot \|_2$ on $\R^m$ and $\R^n$.
Notice that $I_c^{\eps}(\ho)= I_c(\ho)$. Furthermore, for $(u,v,w)\in\Liph$ such that $u$ and $v$ are both square-integrable, we consider the functional
\[
J^{\eps}(u,v,w)\;:=\; -\la uf\ra - \la vg\ra + \lla w\hh\rra - \frac{\eps}2 \| u\|_2^2 - \frac{\eps}2 \| v\|_2^2.
\]
We can extend $J^{\eps}$ to a functional all over $\Liph$ by setting $J^{\eps}(u,v,w):=-\infty$ if $u\not\in L^2(\R^m)$ or $v\not\in L^2(\R^n)$.

\medskip

In a first step, we derive the analogous statement to Proposition \ref{P1} for the relaxed problem.

\begin{lemma}[Easy direction of relaxed duality]
\label{L1}
For $\epsilon>0$, the hypotheses of Theorem \ref{T1} imply
\begin{equation}\label{2}
\inf_{0\le h\le \hh} I^{\eps}_c(h)\;\ge\; \sup_ {(u,v,w)\in \Liph} J^{\eps}(u,v,w).
\end{equation}
\end{lemma}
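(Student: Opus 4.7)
My plan is to mimic exactly the two-line chain of inequalities used in the finite-dimensional toy problem of Section \ref{S2}. Since $J^{\eps}$ is set to $-\infty$ whenever $u\notin L^2$ or $v\notin L^2$, we may restrict attention to triples $(u,v,w)\in\Liph$ with $u\in L^2(\R^m)$ and $v\in L^2(\R^n)$ in addition to $w\in L^1(\hh\,dxdy)$; any other triple contributes nothing to the supremum.

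Fix such a triple and fix an admissible $h$ with $0\le h\le \hh$. The two inequalities defining $\Liph$, namely $c\ge w-u-v$ (pointwise) and $w\le 0$, combined with $0\le h\le \hh$, give
\[
ch \;\ge\; wh - uh - vh \;\ge\; w\hh - uh - vh,
\]
since multiplying $w\le 0$ by $h-\hh\le 0$ reverses the sign. Integrating, using Fubini to trade $\lla uh\rra=\la u\la h\ra_x\ra$ and $\lla vh\rra=\la v\la h\ra_y\ra$, and then writing $\la h\ra_x=f+(\la h\ra_x -f)$ (and similarly for the $y$-marginal), I obtain
\[
\lla ch\rra \;\ge\; \lla w\hh\rra - \la uf\ra -\la vg\ra - \la u(\la h\ra_x-f)\ra - \la v(\la h\ra_y-g)\ra.
\]
All cross terms are finite: $\la h\ra_x-f$ and $\la h\ra_y-g$ are bounded and compactly supported (hence in $L^2$) because $\hh$ is compactly supported and bounded, so Cauchy--Schwarz together with $u,v\in L^2$ controls them.

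Now I absorb the two inner-product corrections into the quadratic penalties by completing the square:
\[
-\la u(\la h\ra_x-f)\ra +\frac1{2\eps}\|\la h\ra_x-f\|_2^2 \;=\; \frac1{2\eps}\bigl\|\la h\ra_x - f -\eps u\bigr\|_2^2 -\frac{\eps}2\|u\|_2^2 \;\ge\; -\frac{\eps}2\|u\|_2^2,
\]
and similarly for the $y$-marginal and $v$. Adding the two penalty terms of $I_c^{\eps}(h)$ to the inequality for $\lla ch\rra$ thus yields
\[
I_c^{\eps}(h) \;\ge\; \lla w\hh\rra -\la uf\ra-\la vg\ra-\frac{\eps}2\|u\|_2^2-\frac{\eps}2\|v\|_2^2 \;=\; J^{\eps}(u,v,w).
\]

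Taking the infimum over admissible $h$ on the left and the supremum over admissible triples on the right gives \eqref{2}. The main (and essentially only) thing to check is that every expression above is well-defined; this follows from the compact support and boundedness of $\hh$ (hence of $h,f,g$ and their marginals) together with $c\in L^1_{\loc}$, $w\in L^1(\hh\,dxdy)$, and $u,v\in L^2$. The step itself is a direct infinite-dimensional transcription of the chain in Section~\ref{S2}, so I expect no genuine obstacle here: the payoff of doing the harder work is deferred to Proposition~\ref{P2}, where equality must be attained via a first-order analysis.
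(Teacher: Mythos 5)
Your proposal is correct and takes essentially the same approach as the paper: both decompose $I_c^{\eps}(h)$ into $J^{\eps}(u,v,w)$ plus manifestly nonnegative remainders by combining the $\Liph$ constraints $(c+u+v-w\ge 0$, $w\le 0)$, the capacity bound $0\le h\le\hh$, and a completion of the square involving $\la h\ra_x-f-\eps u$ and $\la h\ra_y-g-\eps v$. The only cosmetic difference is that you organize the argument as a chain of pointwise inequalities followed by integration, while the paper writes a single algebraic identity for $I_c^{\eps}(h)$ and then discards the nonnegative terms.
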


\begin{proof}Without lost of generality we may choose $0\le h\le\hh$ and $(u,v,w)\in \Liph$ such that $I_c^{\eps}(h)$ and $J^{\eps}(u,v,w)$ are both finite. A short computation shows that $I_c^{\eps}(h)$ can be rewritten as
\begin{eqnarray*}
I_c^{\eps}(h) &=& -\la uf\ra- \la vg\ra + \lla w\hh\rra - \frac{\eps}2\|u\|_2^2 - \frac{\eps}2\|v\|_2^2\\
&&\mbox{} + \lla \left(c+u+v-w\right)h\rra + \lla w(h-\hh)\rra\\
&&\mbox{} + \frac1{2\eps}\left\| \la h\ra_x-f-\eps u\right\|_2^2+ \frac1{2\eps} \left\|\la h\ra_y-g-\eps v\right\|_2^2.
\end{eqnarray*}
By the definition of $\Liph$ and $J^{\eps}(u,v,w)$, recalling that $0\le h\le\hh$, and observing that the term in the last line is trivially nonnegative, it follows that
\[
I_c^{\eps}(h)\;\ge\; J^{\eps}(u,v,w).
\]
Taking the infimum on the left hand side and the supremum on the right hand side yields \eqref{2}.

\end{proof}

\medskip

We next address existence of minimizers for the relaxed problem.

\begin{lemma}[Existence of minimizers and uniqueness of relaxed marginals]
\label{L2}
The hypotheses of Lemma \ref{L1} imply 
existence of a minimizer $\he$ of $I_c^{\eps}$, and $\he$ can be chosen of the form $\he=\h\chi_{W_{\eps}}$ for some Lebesgue measurable set $\We$ in $\R^m\times\R^n$. Moreover, if $\tilde h_{\eps}$ is another minimizer of $I_c^{\eps}$, then $\la\he\ra_x = \la\tilde h_{\eps}\ra_x$ and $\la\he\ra_y=\la\tilde h_{\eps}\ra_y$.
\end{lemma}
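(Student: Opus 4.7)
My plan is to proceed in three steps: existence by the direct method, uniqueness of the relaxed marginals via strict convexity of the quadratic penalty, and reduction to the structural theorem of \cite{KormanMcCann13} to produce a minimizer of the extreme-point form $\hh\chi_{W_\eps}$.

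For existence, I would observe that since $\hh$ is bounded and compactly supported in $\W$, the admissible set $\mathcal{A}:=\{h:0\le h\le\hh\}$ lies in the closed ball of radius $\|\hh\|_\infty$ of $L^\infty(\W)$. Because $\W$ has finite measure, the predual $L^1(\W)$ is separable, so the sequential Banach--Alaoglu theorem extracts a weak-$*$ convergent subsequence $h_n\overset{*}{\rightharpoonup}h^*$ from any minimizing sequence, with the pointwise bounds $0\le h^*\le \hh$ preserved. The linear term $\lla c h_n\rra$ converges because $c\chi_\W\in L^1$, using $c\in L^1_\loc$ and compactness of $\W$. For the marginal terms, note that for any $\phi\in L^2(\R^m)$, the function $\phi(x)\chi_\W(x,y)$ lies in $L^1(\R^m\times\R^n)$ by Cauchy--Schwarz on the bounded projection $\proj_x\W$; testing the weak-$*$ convergence against it yields $\la h_n\ra_x\rightharpoonup \la h^*\ra_x$ weakly in $L^2(\R^m)$, and symmetrically for the $y$-marginals. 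Weak lower semicontinuity of $\|\cdot-f\|_2^2$ then gives $I_c^\eps(h^*)\le\liminf I_c^\eps(h_n)$, so $h^*$ is a minimizer.

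For marginal uniqueness, suppose $h_1,h_2$ are two minimizers. Their midpoint lies in $\mathcal{A}$ and by convexity of $I_c^\eps$ is also a minimizer; since $h\mapsto \|\la h\ra_x-f\|_2^2$ is strictly convex in $\la h\ra_x$, the assumption $\la h_1\ra_x\ne\la h_2\ra_x$ would force $I_c^\eps(\tfrac12(h_1+h_2))<\tfrac12(I_c^\eps(h_1)+I_c^\eps(h_2))$, contradicting optimality. Hence the $x$-marginals coincide, and symmetrically so do the $y$-marginals. For the extreme-point form, fix any minimizer $\he$ from Step 1 and set $f_\eps:=\la\he\ra_x$, $g_\eps:=\la\he\ra_y$. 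By marginal uniqueness, the set of $I_c^\eps$-minimizers coincides with the set of minimizers of $\lla c h\rra$ subject to $\la h\ra_x=f_\eps$, $\la h\ra_y=g_\eps$, and $0\le h\le\hh$, since on this constraint set the penalty terms are constant. The reduced problem is nonempty (it contains $\he$), and the structural result of \cite{KormanMcCann13} then yields a minimizer of the form $\hh\chi_{W_\eps}$, which serves as our desired $\he$.

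The main technical delicacy lies in the compactness argument of Step 1: converting the weak-$*$ $L^\infty$ convergence of $\{h_n\}$ into weak $L^2$ convergence of the marginals $\{\la h_n\ra_x\}$ and $\{\la h_n\ra_y\}$. This relies essentially on the compact support hypothesis for $\hh$, which lets us embed $L^2$ test functions on the relevant projections into $L^1$ via Cauchy--Schwarz. Once that is in place, the remaining convexity and extremal-structure arguments are standard.
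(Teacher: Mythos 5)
Your proposal is correct and follows essentially the same route as the paper: direct method with weak-$*$ $L^\infty$ compactness for existence, strict convexity of the quadratic penalty for uniqueness of the marginals, and reduction to the fixed-marginal problem to invoke the geometrically-extreme minimizer of \cite{KormanMcCann13}. Your handling of the marginals is slightly cleaner in that you derive their weak $L^2$ convergence directly from the weak-$*$ convergence of $h_n$ by testing against $\phi(x)\chi_{\W}(x,y)$, rather than extracting a further subsequence as the paper does.
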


Existence of minimizers and uniqueness of their marginals follow by standard arguments. We provide the proof for the convenience of the reader.

\begin{proof} Since $\ho$ is admissible for $I_c^{\eps}$ with $I_c^{\eps}(\ho) = I_c(\ho)$, it follows that $-\|\h\|_{\infty} \|c\|_{L^1(\W)}\le\inf I^{\eps}_c(h)\le I_c(\ho)<\infty$, where the infimum is taken over all admissible $h$. Let $\{h_{\nu}\}_{\nu\uparrow\infty}$ denote a minimizing sequence. By the $L^{\infty}$-constraint $0\le h_{\nu}\le\h$, we can find an $L^{\infty}$-function $ h_{\eps}$ satisfying $0\le  h_{\eps}\le \h$ and we can extract a subsequence converging to $ h_{\eps}$ weakly-$\star$ in $L^{\infty}$. Moreover, as the sequences $\{\la h_{\nu}\ra_x-f\}_{\nu\uparrow\infty}$ and $\{\la h_{\nu}\ra_y-g\}_{\nu\uparrow\infty}$ are both bounded in $L^2$, we may extract a further subsequence ensuring that these sequences converge weakly in $L^2$ towards $\la h_{\eps}\ra_x-f$ and $\la h_{\eps}\ra_y-g$, respectively. Without relabeling the subsequences, we then have
\begin{eqnarray*}
\|\la h_{\eps}\ra_x-f\|_2&\le& \liminf_{\nu\uparrow\infty} \|\la h_{\nu}\ra_x-f\|_2,\\
\|\la h_{\eps}\ra_y -g\|_2&\le& \liminf_{\nu\uparrow\infty} \|\la h_{\nu}\ra_y -g\|_2,
\end{eqnarray*}
by the lower semi-continuity of the $L^2$ norm with respect to weak $L^2$ convergence. Moreover, since $c\in L^1_{\loc}$ and $h_{\eps}, h_{\nu}$ are supported in $\W$, weak-$\star$ convergence guarantees that
\[
\lla c h_{\eps} \rra = \lim_{\nu\uparrow\infty} \lla c h_{\nu} \rra.
\]
Hence, by combining the above (in)equalities, we have
\[
I_c^{\eps}(h_{\eps})\;\le\; \liminf_{\nu\uparrow\infty} I_c^{\eps}(h_{\nu}).
\]
Since $\{h_{\nu}\}_{\nu\uparrow\infty}$ was a minimizing sequence, it turns out that $h_{\eps}$ minimizes $I_c^{\eps}$. By strict convexity of the relaxed optimization problem, $h_{\eps}$ has unique marginals. Moreover, since $\he$ minimizes $I_c$ in the class $\Gamma^{\h}(\la \he\ra_x, \la \he\ra_y)$, we can choose $\he$ geometrically extreme (with respect to $\ho$): $\he=\h\chi_{\We}$ for some Lebesgue measurable set $\We\subset\W$, cf.\ \cite{KormanMcCann13}.
\end{proof}

In the following, we construct an approximate dual triple $(u_{\eps}, v_{\eps}, w_{\eps})$ by defining
\begin{eqnarray}
u_{\eps}&:=& \frac1{\eps}\left(\la\he\ra_x-f\right),\label{3}\\
v_{\eps} &:=& \frac1{\eps}\left( \la\he\ra_y-g\right),\label{4}\\
w_{\eps}&:=&\min\{c+ \ue+\ve,0\}\label{5}.
\end{eqnarray}
The definition of $\we$ entails that $c +u_{\eps} + v_{\eps} -w_{\eps} \ge0$ and $\we\le0$. Observe that by Lemma \ref{L2} these triples are determined independently of the choice of $\he$. 
Notice that $u_\eps$ and $v_\eps$ (but note $w_\eps$) depend linearly on $h_\eps$, echoing our finite
dimensional model problem.
In  Lemma \ref{L6} below, we prove that this triple maximizes $J^{\eps}$ in $\Liph$, which in turn yields the duality theorem for the relaxed problem. We can pass to the limit $\eps\downarrow0$ in this duality to prove Proposition~\ref{P2}.

\begin{lemma}[Euler--Lagrange equations for relaxed problem]\label{L3}
Taking $\he$ and $W_{\eps}$ from Lemma~\ref{L2}, using
\eqref{3}--\eqref{5} to define  $(u_\eps,v_\eps,w_\eps)$ yields
\begin{equation}
\label{6}
c + u_{\eps} + v_{\eps}\;\left\{\begin{array}{ll}\le\; 0&\quad\mbox{a.e.\ in } W_{\eps},\\
\ge\; 0&\quad\mbox{a.e.\ in }\W\setminus W_{\eps}.\end{array}\right.
\end{equation}
\end{lemma}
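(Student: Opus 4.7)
The plan is to derive \eqref{6} as the first-order necessary condition for $h_\eps$ to minimize $I_c^\eps$ over the pointwise constraint $0\le h\le\h$. The key simplification afforded by the penalization is that there are no marginal constraints on competitors, so admissible perturbations of $h_\eps$ may be taken \emph{local} (i.e., supported on arbitrarily small measurable sets), which in turn leads directly to a pointwise Euler--Lagrange inequality without any dual variables.

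First I would compute the first variation. For any bounded measurable $\eta$ with $\spt\eta\subset\W$, the function $s\mapsto I_c^\eps(\he+s\eta)$ is a quadratic polynomial in $s$, and a short calculation using Fubini gives
\[
\frac{d}{ds}\bigg|_{s=0}I_c^\eps(\he+s\eta)\;=\;\lla c\eta\rra+\frac1\eps\bigl\langle(\la\he\ra_x-f)\la\eta\ra_x\bigr\rangle+\frac1\eps\bigl\langle(\la\he\ra_y-g)\la\eta\ra_y\bigr\rangle\;=\;\lla(c+\ue+\ve)\eta\rra,
\]
where the last equality uses the definitions \eqref{3}--\eqref{4} of $u_\eps$ and $v_\eps$. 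Note that $c+\ue+\ve$ is integrable on $\W$, since $c\in L^1_{\loc}$, $\W$ is compact, and $u_\eps,v_\eps\in L^2$ have compact support.

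Next I would choose admissible directions. On $W_\eps$, where $\he=\h$, for any bounded measurable $E\subset W_\eps$ the perturbation $\eta=-\h\chi_E$ satisfies $0\le\he+s\eta\le\h$ for all $s\in[0,1]$, so minimality forces the derivative at $s=0$ to be nonnegative: $-\iint_E\h(c+\ue+\ve)\,dxdy\ge0$. Since $E\subset W_\eps$ is arbitrary and $\h>0$ a.e.\ on $W_\eps$ (as $\he=\h\chi_{W_\eps}$ implicitly identifies $W_\eps$ with the support of $\he$), this yields $c+\ue+\ve\le 0$ a.e.\ on $W_\eps$. Symmetrically, on $\W\setminus W_\eps$ where $\he=0$, the perturbation $\eta=\h\chi_E$ for $E\subset\W\setminus W_\eps$ satisfies $0\le\he+s\eta=s\h\chi_E\le\h$ for $s\in[0,1]$, and minimality gives $\iint_E\h(c+\ue+\ve)\,dxdy\ge 0$; arbitrariness of $E$ yields the second inequality of \eqref{6}.

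The main obstacle — really the only nontrivial point — is ensuring admissibility of the variations and handling the subset of $\W$ where $\h$ vanishes: the argument above only tests against the weight $\h$, so the pointwise conclusion holds on $\{\h>0\}$, which is a.e.\ the statement claimed since $\W=\spt\h$. Everything else is bookkeeping: the minimization is unconstrained apart from the $L^\infty$ box constraint, so Lebesgue differentiation of the variational inequality against characteristic functions of arbitrary measurable subsets suffices to pass from the integral inequalities to the pointwise a.e.\ inequalities in \eqref{6}.
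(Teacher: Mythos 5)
Your proof is correct and follows essentially the same route as the paper: compute the first variation of $I_c^\eps$ against perturbations of $\he$ that respect the box constraint $0\le h\le\h$, identify the derivative with $\lla(c+\ue+\ve)\eta\rra$ via the definitions \eqref{3}--\eqref{4}, and localize. The only cosmetic difference is that the paper tests with smooth $\zeta\ge0$ against the perturbations $\sigma\zeta(\h-\he)$ and $-\sigma\zeta\he$ (and invokes the Fundamental Lemma of the Calculus of Variations), whereas you test directly with $\pm\h\chi_E$ for measurable $E$ in the appropriate region; both are valid and amount to the same localization step.
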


\begin{proof}[Proof of Lemma \ref{L3}]
Let $\zeta \ge0$ denote an arbitrary smooth test function. We give the argument for the second inequality in \eqref{6} by considering the outer perturbation
\[
h_{\eps}^{\sigma}:= \he + \sigma \zeta(\h-\he)=\left\{ \begin{array}{ll} \he &\quad\mbox{a.e.\ in }W_{\eps},\\
\sigma \zeta\h &\quad\mbox{a.e.\ in }\W\setminus W_{\eps}.\end{array}\right.
\]
Obviously $\he^0=\he$ and $0\le h_{\eps}^{\sigma}\le \h$ for $0\le \sigma\le \|\zeta\|_{\infty}^{-1}$. Hence,  by the optimality of $\he$ we have $I^{\eps}_c(\he^0)\le I^{\eps}_c(h_{\eps}^{\sigma})$, and a short computation using \eqref{3}\&\eqref{4} yields
\[
0\;\le\; \left.\frac{d}{d\sigma}\right|_{\sigma=0} I^{\eps}_c(h_{\eps}^{\sigma})\; =\;
\lla \left(c + u_{\eps} + v_{\eps}\right)\zeta(\h-\he)\rra.
\]
This estimate holds for all smooth test functions $\zeta\ge0$. Via the Fundamental Lemma of Calculus of Variations it immediately follows  that $\left(c + u_{\eps} + v_{\eps}\right)(\h-\he)\ge0$ almost everywhere. Moreover, since $\h-\he$ is nonnegative almost everywhere and positive almost everywhere in $\W\setminus \We$, we deduce the second inequality in \eqref{6}.

\medskip

The argument for the first inequality in \eqref{6} is proved similarly, we just need to consider the perturbation $h_{\eps}^{\sigma}:= \he - \sigma\zeta\he$ and argue as above.
\end{proof}


\begin{lemma}[A duality theorem for the relaxed problem]\label{L6}
Taking $\he$ and $W_{\eps}$ from Lemma~\ref{L2} and using
\eqref{3}--\eqref{5} to define  $(u_\eps,v_\eps,w_\eps)$ yields
\begin{equation}\label{7}
I_c^{\eps}(\he)\;=\; J^{\eps}(\ue,\ve,\we).
\end{equation}
In particular, $(\ue,\ve,\we)$ maximizes $J^{\eps}(u,v,w)$ in $\Liph$.
\end{lemma}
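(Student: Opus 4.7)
My plan is to verify \eqref{7} by a direct calculation that exploits the algebraic identity already established in the proof of Lemma~\ref{L1}, and then to deduce the maximization claim from Lemma~\ref{L1} itself.

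First I would check that the triple $(\ue,\ve,\we)$ defined by \eqref{3}--\eqref{5} actually lies in $\Liph$. The inequality $\we\le 0$ is built into the definition \eqref{5}, and the constraint $c+\ue+\ve-\we\ge 0$ follows by considering the two cases in the $\min$: where $c+\ue+\ve\le 0$ we have $\we = c+\ue+\ve$ so the difference is $0$, and where $c+\ue+\ve\ge 0$ we have $\we=0$ so the difference is $c+\ue+\ve\ge 0$. Integrability of $\ue\in L^2(\R^m)$ and $\ve\in L^2(\R^n)$ follows from \eqref{3}--\eqref{4} because $\la\he\ra_x-f$ and $\la\he\ra_y-g$ are bounded and compactly supported; this plus $f,g\in L^2$ gives $\ue\in L^1(f\,dx)$ and $\ve\in L^1(g\,dy)$ by Cauchy--Schwarz. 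Integrability of $\we$ against $\h\,dxdy$ follows from the bound $|\we|\le|c|+|\ue|+|\ve|$ on the compact set $\W=\spt(\h)$, together with $c\in L^1_\loc$.

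Next, I would apply the algebraic identity derived in the proof of Lemma~\ref{L1}, namely
\begin{align*}
I_c^{\eps}(h) &= -\la uf\ra-\la vg\ra+\lla w\h\rra-\frac{\eps}{2}\|u\|_2^2-\frac{\eps}{2}\|v\|_2^2 \\
&\quad + \lla(c+u+v-w)h\rra + \lla w(h-\h)\rra \\
&\quad + \frac{1}{2\eps}\|\la h\ra_x-f-\eps u\|_2^2 + \frac{1}{2\eps}\|\la h\ra_y-g-\eps v\|_2^2,
\end{align*}
specialized to $h=\he$ and $(u,v,w)=(\ue,\ve,\we)$. The last two squared terms vanish by construction, directly from \eqref{3} and \eqref{4}. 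The remaining job is to verify that the two ``slack'' integrals $\lla(c+\ue+\ve-\we)\he\rra$ and $\lla\we(\he-\h)\rra$ both vanish, which will give exactly $I_c^{\eps}(\he)=J^{\eps}(\ue,\ve,\we)$.

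This is where the Euler--Lagrange information from Lemma~\ref{L3} does all the work, and I expect this to be the only substantive step. On $\We$, the first inequality in \eqref{6} forces $\we=c+\ue+\ve$ by \eqref{5}, so $c+\ue+\ve-\we=0$; off $\We$ the factor $\he$ vanishes since $\he=\h\chi_{\We}$. Hence $\lla(c+\ue+\ve-\we)\he\rra=0$. For the second slack term, on $\We$ one has $\he-\h=0$; on $\W\setminus\We$ the second inequality in \eqref{6} combined with \eqref{5} forces $\we=0$; and off $\W$ both $\h$ and $\he$ vanish so $\he-\h=0$. In every case the product $\we(\he-\h)$ is zero a.e., and the second slack integral also vanishes. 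Combining the two vanishings yields \eqref{7}.

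Finally, to obtain the maximization assertion, I would invoke Lemma~\ref{L1}, which gives $\sup_{\Liph}J^{\eps}\le\inf_{0\le h\le\h}I_c^{\eps}(h)$. Since $\he$ attains the latter infimum by Lemma~\ref{L2} and since \eqref{7} shows $(\ue,\ve,\we)\in\Liph$ achieves the equal value $I_c^{\eps}(\he)$, the triple must in fact maximize $J^{\eps}$ over $\Liph$.
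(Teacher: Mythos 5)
Your proof is correct and follows essentially the same route as the paper's: you derive the same algebraic identity expressing $J^{\eps}(\ue,\ve,\we)-I_c^{\eps}(\he)$ in terms of the two slack integrals (via the decomposition from Lemma~\ref{L1}, with the quadratic terms killed by \eqref{3}--\eqref{4}), annihilate both slack integrals using \eqref{5} and the Euler--Lagrange inequalities \eqref{6}, and then conclude maximality from Lemma~\ref{L1}. You spell out the membership $(\ue,\ve,\we)\in\Liph$ in more detail than the paper does, but this is the same argument the paper summarizes as ``by construction.''
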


\begin{proof}
Using the definition of $\ue$ and $\ve$, we easily compute that
\[
J^{\eps}(\ue,\ve,\we)
\;=\; I_c^{\eps}(\he)  -\lla\left(c + \ue + \ve- \we\right)\he\rra + \lla \we (\hh - \he)\rra.
\]
In view of \eqref{5} and \eqref{6} we see that $(c + \ue + \ve - \we)\he\equiv0$ and $ \we (\hh - \he)\equiv 0$. Hence, \eqref{7} follows.

\medskip

In view of \eqref{2}, the triple $(\ue,\ve,\we)$ is a maximizer of $J^{\eps}$ in $\Liph$ because $(\ue,\ve,\we)\in\Liph$ by construction.
\end{proof}


The next result shows that solutions to the relaxed problem approximate the original one ``as the soft constraints become harder''.

\begin{lemma}[Extracting a limit from the penalized problems]\label{L8}
The sequence $\{\he\}_{\eps\downarrow0}$ defined by Lemma \ref{L2}
is precompact in the $L^{\infty}$-weak-$\star$ topology and every limit point $\ho$ is a minimizer of $I_c$. Moreover,
\begin{eqnarray}
\lim_{\eps\downarrow 0} I_c(\he)&=&I_c(\ho),\label{16}\\
\lim_{\eps\downarrow 0} \eps \|\ue\|_2^2&=&0,\label{17}\\
\lim_{\eps\downarrow 0} \eps \| \ve\|_2^2&=&0.\label{18}
\end{eqnarray}
\end{lemma}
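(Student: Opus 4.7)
The plan is to use a (known) minimizer $h_*$ of the original problem as a competitor in the penalized problem. Since $h_* \in \Gamma^{\h}(f,g)$, the marginal penalties vanish and $I_c^{\eps}(h_*) = I_c(h_*) = \min I_c$. Optimality of $\he$ then yields
\[
\lla c\he\rra + \frac{1}{2\eps}\|\la\he\ra_x-f\|_2^2 + \frac{1}{2\eps}\|\la\he\ra_y-g\|_2^2 \;\le\; \min I_c.
\]
Because $0\le\he\le\h$ and $\h$ is compactly supported in $\W$ while $c\in L^1_{\loc}$, the cost integral $\lla c\he\rra$ is uniformly bounded by $\|\h\|_\infty\|c\|_{L^1(\W)}$, forcing both penalty terms to be $O(\eps)$; consequently $\|\la\he\ra_x - f\|_2 \to 0$ and $\|\la\he\ra_y - g\|_2 \to 0$ strongly in $L^2$.

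Next I would apply Banach--Alaoglu: the bound $0\le\he\le\h$ together with the common compact support in $\W$ makes $\{\he\}$ precompact in the weak-$\star$ topology of $L^\infty$, and any limit point $\ho$ satisfies $0\le\ho\le\h$. Testing against functions of the form $\phi(x)\chi_{\W}(x,y)$ shows that $\la\he\ra_x\to\la\ho\ra_x$ in the sense of distributions, and combined with the strong $L^2$ convergence to $f$ this identifies $\la\ho\ra_x = f$; similarly $\la\ho\ra_y=g$, so $\ho\in\Gamma^{\h}(f,g)$. Since $c\chi_{\W}\in L^1$ pairs with the weak-$\star$ limit, $I_c(\he)\to I_c(\ho)$, which is \eqref{16}. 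Combining with $I_c(\he)\le I_c^{\eps}(\he) \le \min I_c$ and passing to the limit gives $I_c(\ho)\le \min I_c$, while admissibility forces the reverse inequality, so $\ho$ is itself a minimizer.

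For \eqref{17} and \eqref{18}, I would use the definitions \eqref{3}--\eqref{4} to identify the two penalty terms with $\frac{\eps}{2}\|\ue\|_2^2$ and $\frac{\eps}{2}\|\ve\|_2^2$ respectively, yielding the energy identity
\[
I_c^{\eps}(\he) - I_c(\he) \;=\; \frac{\eps}{2}\|\ue\|_2^2 + \frac{\eps}{2}\|\ve\|_2^2.
\]
The left-hand side vanishes in the limit along the chosen subsequence (since $I_c(\he)\to\min I_c$ by \eqref{16} and $I_c^{\eps}(\he)$ is sandwiched between $I_c(\he)$ and $\min I_c$), and nonnegativity of each summand then delivers \eqref{17} and \eqref{18} individually. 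The only mildly subtle point in the whole argument is justifying $\lla c\he\rra \to \lla c\ho\rra$ when $c$ is only locally integrable, but the fixed compact support $\W$ reduces this to pairing the honest $L^1$ function $c\chi_{\W}$ against a weak-$\star$ convergent sequence in $L^\infty$.
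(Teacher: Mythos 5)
Your proposal is correct and follows essentially the same route as the paper: extract a weak-$\star$ limit $\ho$ via Banach--Alaoglu from the uniform bound $0\le\he\le\h$, use the competitor bound $I_c^\eps(\he)\le I_c^\eps(h_*)=\min I_c$ to force the penalty terms to be $O(\eps)$ and hence get strong $L^2$ convergence of the marginals, identify the marginals of $\ho$ by testing against functions of $x$ alone (resp.\ $y$ alone), pass to the limit in $\lla c\he\rra$ using that $c\chi_{\W}\in L^1$ pairs with $L^\infty$-weak-$\star$ limits, and finally obtain \eqref{17}--\eqref{18} from the energy identity $I_c^\eps(\he)-I_c(\he)=\tfrac{\eps}{2}\|\ue\|_2^2+\tfrac{\eps}{2}\|\ve\|_2^2$ together with the sandwich $I_c(\he)\le I_c^\eps(\he)\le\min I_c$. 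If anything, your explicit lower bound $\lla c\he\rra\ge-\|\h\|_\infty\|c\|_{L^1(\W)}$ makes one step slightly cleaner than the paper's terse deduction of the $O(\eps)$ estimate on the marginal defects from \eqref{10}.
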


\begin{proof}
Since $0\le \he\le \hh$, we immediately see that a subsequence of $\{\he\}_{\eps\downarrow0}$ (which we will not relabel) converges weakly-$\star$ in $L^{\infty}$ to some function $0\le\tilde h\le\h$. 

\medskip

By the optimality of $\he$ and since any minimizer $\tilde h_0$ of the original $\eps=0$ problem is admissible in the relaxed problem, we have the trivial bound
\begin{equation}\label{10}
I_c^{\eps}(\he)\;\le \; I_c^{\eps}(\tilde h_0)\;=\; I_c(\tilde h_0),
\end{equation}
and thus weak-$\star$ convergence of $\{\he\}_{\eps>0}$ implies that
\[
\lla c\tilde h\rra\; = \; \lim_{\eps\downarrow 0} \lla c\he\rra\; \stackrel{\eqref{10}}{ \le}\;\lla c \tilde h_0\rra,
\]
i.e., $I_c(\tilde h)\le I_c(\tilde h_0)$. Since $0\le \tilde h\le \h$, it remains to show that $\tilde h$ satisfies the marginal constraints
\[
f = \la \tilde h\ra_x\quad\mbox{and}\quad g=\la \tilde h\ra_y,
\]
because then $\tilde h$ must be a minimizer of $I_c$, i.e., $I_c(\tilde h)= I_c(\tilde h_0)$.

\medskip

Indeed, from \eqref{10} we deduce that
\[
\|f - \la\he\ra_x\|_2^2+ \|g-\la\he\ra_y\|_2^2\;\le \; 2\eps \lla c \tilde h_0\rra,
\]
which states that $\la\he\ra_x\to f$ and $\la\he\ra_y\to g$ in $L^2$. For any smooth and compactly supported testfunction $\zeta=\zeta(x)$, we write
\[
\la (f-\la \tilde h\ra_x)\zeta\ra
\;=\;\la (f-\la\he\ra_x)\zeta\ra+\la(\la\he\ra_x - \la \tilde h\ra_x)\zeta\ra.
\]
The first integral on the right converges to zero by the $L^2$-convergence of the marginals stated above. The second integral can be rewritten as
$
\lla (\he-\tilde h)\zeta\rra
$
which converges to zero by $L^{\infty}$-weak-$\star$ convergence. Invoking the Fundamental Lemma of Calculus of Variations, this proves that $f=\la \tilde h\ra_x$, and the analogous argument applies for the $y$-marginals, showing that $g=\la \tilde h\ra_y$.

\medskip

Since $I_c(\tilde h)\le\liminf_{\eps\downarrow 0} I_c^{\eps}(\he)$, passing to the limit in \eqref{10}, the above analysis shows that
\[
\min_{h\in\Gamma^{\h}(f,g)} I_c(h)\;=\; \lim_{\eps\downarrow0}I_c(\he)\;=\;\lim_{\eps\downarrow0}I^{\eps}_c(\he),
\]
which implies \eqref{16}--\eqref{18} by the definition of $\ue$ and $\ve$.
\end{proof}


We are now in the position to proof Proposition \ref{P2}.

\begin{proof}[Proof of Proposition \ref{P2}] We may rewrite identity \eqref{7} in terms of $J(\ue,\ve,\we)$ and $I_c(\he)$, that is
\[
J(\ue,\ve,\we)\;=\; I_c(\he) + \eps\|\ue\|_2^2 + \eps \| \ve\|_2^2.
\]
Invoking \eqref{16}--\eqref{18}, we then have
\[
\lim_{\eps\downarrow0}J(\ue,\ve,\we)\;=\; I_c(\ho),
\]
i.e., equation \eqref{9}. It remains to recall that $(\ue,\ve,\we)\in\Lip$ by Lemma~\ref{L6}.
\end{proof}

\bibliographystyle{acm}
\bibliography{OT}

\end{document}